\documentclass[12pt]{amsart}
\usepackage{amssymb,latexsym}
\usepackage{pdfsync}
\usepackage{color}
\usepackage[colorlinks,linkcolor=blue,citecolor=red]{hyperref}

\newdimen\AAdi%
\newbox\AAbo%
%
\def\AAk#1#2{\s_etbox\AAbo=\hbox{#2}\AAdi=\wd\AAbo\kern#1\AAdi{}}%
\def\AAr#1#2#3{\s_etbox\AAbo=\hbox{#2}\AAdi=\ht\AAbo\raise#1\AAdi\hbox{#3}}%
\font\tenmsb=msbm10 at 12pt
\font\sevenmsb=msbm7 at 8pt
\font\fivemsb=msbm5 at 6pt
\newfam\msbfam
\textfont\msbfam=\tenmsb
\scriptfont\msbfam=\sevenmsb
\scriptscriptfont\msbfam=\fivemsb
\def\Bbb#1{{\tenmsb\fam\msbfam#1}}

\newcommand{\beq}{\begin{equation}}
\newcommand{\eeq}{\end{equation}}
\newcommand{\beqr}{\begin{eqnarray}}
\newcommand{\eeqr}{\end{eqnarray}}
\newcommand{\ba}{\begin{array}}
\newcommand{\ea}{\end{array}}

\textwidth 15.80cm \textheight 21.6cm \topmargin 0.2cm
\oddsidemargin 0.1cm \evensidemargin 0.1cm
\parskip 0.0cm

\begin{document}

\newtheorem{thm}{Theorem}
\newtheorem{lem}{Lemma}
\newtheorem{cor}{Corollary}
\newtheorem{rem}{Remark}
\newtheorem{pro}{Proposition}
\newtheorem{defi}{Definition}
\newtheorem{eg}{Example}
\newtheorem*{claim}{Claim}
\newtheorem{conj}[thm]{Conjecture}
\newcommand{\noi}{\noindent}
\newcommand{\dis}{\displaystyle}
\newcommand{\mint}{-\!\!\!\!\!\!\int}
\numberwithin{equation}{section}

\def \bx{\hspace{2.5mm}\rule{2.5mm}{2.5mm}}
\def \vs{\vspace*{0.2cm}}
\def\hs{\hspace*{0.6cm}}
\def \ds{\displaystyle}
\def \p{\partial}
\def \O{\Omega}
\def \o{\omega}
\def \b{\beta}
\def \m{\mu}
\def \l{\lambda}
\def\L{\Lambda}
\def \ul{u_\lambda}
\def \D{\Delta}
\def \d{\delta}
\def \k{\kappa}
\def \s{\sigma}
\def \e{\varepsilon}
\def \a{\alpha}
\def \tf{\tilde{f}}
\def\cqfd{%
\mbox{ }%
\nolinebreak%
\hfill%
\rule{2mm} {2mm}%
\medbreak%
\par%
}
\def \pr {\noindent {\it Proof.} }
\def \rmk {\noindent {\it Remark} }
\def \esp {\hspace{4mm}}
\def \dsp {\hspace{2mm}}
\def \ssp {\hspace{1mm}}

\def\la{\langle}\def\ra{\rangle}

\def \u{u_+^{p^*}}
\def \ui{(u_+)^{p^*+1}}
\def \ul{(u^k)_+^{p^*}}
\def \energy{\int_{\R^n}\u }
\def \sk{\s_k}
\def \mo{\mu_k}
\def\cal{\mathcal}
\def \I{{\cal I}}
\def \J{{\cal J}}
\def \K{{\cal K}}
\def \OM{\overline{M}}

\def\n{\nabla}

\def\fk{{{\cal F}}_k}
\def\M1{{{\cal M}}_1}
\def\Fk{{\cal F}_k}
\def\Fl{{\cal F}_l}
\def\FF{\cal F}
\def\Gk{{\Gamma_k^+}}
\def\n{\nabla}
\def\uuu{{\n ^2 u+du\otimes du-\frac {|\n u|^2} 2 g_0+S_{g_0}}}
\def\uuug{{\n ^2 u+du\otimes du-\frac {|\n u|^2} 2 g+S_{g}}}
\def\sku{\sk\left(\uuu\right)}
\def\qed{\cqfd}
\def\vvv{{\frac{\n ^2 v} v -\frac {|\n v|^2} {2v^2} g_0+S_{g_0}}}
\def\vvs{{\frac{\n ^2 \tilde v} {\tilde v}
 -\frac {|\n \tilde v|^2} {2\tilde v^2} g_{S^n}+S_{g_{S^n}}}}
\def\skv{\sk\left(\vvv\right)}
\def\tr{\hbox{tr}}
\def\pO{\partial \Omega}
\def\dist{\hbox{dist}}
\def\RR{\Bbb R}\def\R{\Bbb R}
\def\C{\Bbb C}
\def\B{\Bbb B}
\def\N{\Bbb N}
\def\Q{\Bbb Q}
\def\Z{\Bbb Z}
\def\PP{\Bbb P}
\def\EE{\Bbb E}
\def\F{\Bbb F}
\def\G{\Bbb G}
\def\H{\Bbb H}
\def\SS{\Bbb S}\def\S{\Bbb S}

\def\div{\hbox{div}\,}

\def\lcf{{locally conformally flat} }

\def\circledwedge{\setbox0=\hbox{$\bigcirc$}\relax \mathbin {\hbox
to0pt{\raise.5pt\hbox to\wd0{\hfil $\wedge$\hfil}\hss}\box0 }}

\def\sss{\frac{\s_2}{\s_1}}

\date{\today}
\title{  The Rigidity Theorem of Legendrian self-shrinkers  }

\author{}

\author[Chang]{Shu-Cheng Chang$^{1\ast }$}
\address{ $^{1}$Department of Mathematics,  National Taiwan University, Taipei 10617,  Taiwan  and
 Shanghai Institute of Mathematics and Interdisciplinary Sciences, Shanghai, 200433, China
}
 \email{scchang@math.ntu.edu.tw}

\author[Qiu]{Hongbing Qiu$^{2\ast \ast}$}
\address{$^{2}$School of Mathematics and Statistics\\ Wuhan University\\Wuhan 430072,
China
 }
 \email{hbqiu@whu.edu.cn}

 \author[Zhang]{Liuyang Zhang$^{3\ast\ast\ast }$}
\address{ $^{3}$Mathematical Science Research Center, Chongqing University of Technology,  400054,  Chongqing, People’s Republic of China
}
 \email{zhangliuyang@cqut.edu.cn}

 \thanks{
 	$^{\ast }$ is partially supported by Startup Foundation for Advanced Talents of  the Shanghai Institute for Mathematics and Interdisciplinary Sciences (No.2302-SRFP-2024-0049). 
 	$^{\ast \ast }$  is partially supported by NSFC (No. 12471050) and Hubei Provincial Natural Science Foundation of China (No. 2024AFB746).  
 	$^{\ast \ast\ast }$  is partially supported by the Scientific and Technological
 	Research Program of Chongqing Municipal Education Commission (No.
 	KJQN202201138); Startup Foundation for Advanced Talents of Chongqing
 	University of Technology\ (No. 2022ZDZ019)
 	}

\begin{abstract}

By estimating the weighted volume, we obtain the optimal volume growth for Legendrian self-shrinkers. This, in turn, yields a rigidity theorem for entire smooth Legendrian self-shrinkers in the standard contact Euclidean $(2n+1)-$space.

\vskip12pt

\noindent{\it Keywords and phrases}:  Legendrian self-shrinker, volume estimate, rigidity

\noindent {\it MSC 2020}:  53C24, 53E10 

\end{abstract}
\maketitle
\section{Introduction}

The mean curvature flow is one of the most fundamental geometric evolution equations for submanifolds in Riemannian, Kähler, and Sasakian manifolds.
Intuitively, it is a family of smooth immersions  $X_{t} : L \to (N,g) $ satisfying

\[\frac{d}{dt} X_{t} = H,\]
where  $H$  denotes the mean curvature vector of the immersion along  $L_{t} = X_{t}(L)$ .
It can be viewed as the negative gradient flow of the volume functional of  $L_{t}$ , and its stationary points are precisely minimal submanifolds.
This evolution equation plays a central role in geometric analysis; for the hypersurface case, we refer to the survey \cite{HP96}.

In higher codimension, one particularly interesting example is the Lagrangian mean curvature flow, which preserves the class of Lagrangian submanifolds when the ambient manifold is Kähler–Einstein \cite{S96}.
For Legendrian  $n$-submanifolds in Sasakian $(2n+1)$-manifolds, however, the mean curvature flow does not preserve the Legendrian condition.
Smoczyk \cite{S03} introduced a modified flow, called the Legendrian mean curvature flow, which preserves the Legendrian condition when the ambient manifold is $\eta$-Einstein Sasakian. This flow takes the form

\[\frac{d}{dt} X_{t} = H - \theta \,\xi,\]
where $\theta$ is the Legendrian angle and $\xi$ is the unit Reeb vector field.
The guiding idea is to minimize the volume energy within the class of Legendrian immersions and to evolve via a flow that respects this constraint.

Recently, Chang–Han–Wu \cite{CHW24} established long-time existence and asymptotic convergence results for the Legendrian mean curvature flow in $\eta$-Einstein Sasakian $(2n+1)$-manifolds under a suitable stability condition inspired by the Thomas–Yau conjecture \cite{TY01}.

In general, the Legendrian mean curvature flow starting from  L  develops singularities in finite time. In the work of Chang–Wu–Zhang \cite{CWZ23}, the authors investigated Type-I singularities of the Legendrian mean curvature flow via blow-up analysis.
If the singularity is of Type-I, then there exists a subsequence  $\lambda_{i}$  such that the space–time track of the smooth flow, under the parabolic dilation  $D_{\lambda_{i}} X$, converges smoothly to a limit immersion $ X_{\infty}$,  which satisfies
\begin{equation}\label{CCC}
	\begin{array}{c}
		H-\theta \mathcal{\xi }=\frac{1}{2s}X^{\bot }
	\end{array}
\end{equation}
 for $-\infty <s<0$. That is,  $X_{\infty}$  is a self-shrinker solution of the Legendrian mean curvature flow, given by
 
\[ X(x,s) = \sqrt{-s}\, X(x,0).\]
 
 Furthermore, in the work of Chang–Wu–Zhang–Zhang \cite{CWZ24}, examples were constructed and partial classification results obtained for complete Legendrian self-shrinker surfaces in the standard contact five-space $\mathbb{R}^{5}$.
 
 In the present paper, we establish a rigidity theorem for entire Legendrian self-shrinkers (\ref{CCC})in the contact Euclidean space $\mathbb{R}^{2n+1}$ evolving under the entire Legendrian mean curvature flow (\ref{AA}).  For comparison, Chau–Chen–Yuan \cite{CCY12} studied entire Lagrangian self-shrinking graphs
 
\[ \{(x, Du(x)) \mid x \in \mathbb{R}^{n}\}\]
  in the standard Euclidean 2n-space, constructing a barrier function to show, via the maximum principle, that the Lagrangian angle is constant, which in turn implies that u is a quadratic polynomial. See also the related work of Ding–Xin \cite{DX14}.

Here we state our main result :

\begin{thm}\label{thm1}
	If $u$ is an entire smooth solution to the equation (\ref{eqn-LeSh2}) in $\mathbb R^n$, then $u(x)$ is the quadratic polynomial 
	$u(0)+\frac{1}{2}\la D^2 u(0)x, x \ra$.
	
\end{thm}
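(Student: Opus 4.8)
The plan is to adapt the strategy of Chau--Chen--Yuan \cite{CCY12} to the contact setting, replacing their barrier-plus-maximum-principle argument by a weighted Liouville argument powered by the optimal volume growth. View an entire solution $u$ of (\ref{eqn-LeSh2}) as the potential of a Legendrian self-shrinking graph $L\subset\R^{2n+1}$ satisfying (\ref{CCC}), and let $\theta$ denote its Legendrian angle. For such a graph $\theta=\sum_{i}\arctan\lambda_i$, where $\lambda_1,\dots,\lambda_n$ are the eigenvalues of $D^2u$; in particular $|\theta|<\frac{n\pi}{2}$ is \emph{bounded a priori}, independently of $u$. The entire argument consists in upgrading this boundedness to constancy, and then showing that a self-shrinking graph of constant Legendrian angle must be a plane.

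First I would record the second-order equation satisfied by $\theta$ along $L$. Writing $f=\frac{|X|^2}{4}$ and letting $\Delta_f=\Delta-\la\n f,\n\,\cdot\,\ra$ be the drift Laplacian of the induced metric, the identity relating the Legendrian angle to the contact mean curvature $H-\theta\xi$, combined with the self-shrinker equation (\ref{CCC}), expresses $\n\theta$ through the tangential part of the position vector and, upon taking the drift divergence, yields
\[
\Delta_f\theta=0.
\]
This is the contact analogue of the classical drift-harmonicity of the Lagrangian angle on Lagrangian self-shrinkers, and it is the only point at which the differential form of (\ref{CCC}) enters.

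The heart of the proof is the Liouville step, which is where the optimal volume growth is indispensable. By the volume estimate established above, $\mathrm{Vol}(B_r\cap L)\le Cr^n$, so the Gaussian weight $e^{-f}$ yields \emph{finite weighted volume} and, for the standard logarithmic cutoffs $\phi$ supported on dyadic annuli, $\int_L|\n\phi|^2e^{-f}\,dV\to 0$. Multiplying $\Delta_f\theta=0$ by $\phi^2\theta\,e^{-f}$ and integrating by parts gives the Caccioppoli-type bound
\[
\int_L\phi^2|\n\theta|^2e^{-f}\,dV\le 4\,\|\theta\|_\infty^2\int_L|\n\phi|^2e^{-f}\,dV,
\]
whose right-hand side tends to $0$. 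Hence $\n\theta\equiv0$ and $\theta$ is constant. The delicate quantitative point here is exactly that the volume bound be \emph{sharp}: only the Euclidean exponent $n$ guarantees that the Gaussian weight swallows the cutoff energy, so any loss in the growth rate would leave an uncontrolled boundary term.

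Finally, with $\theta$ constant the graph becomes special Legendrian, so the horizontal part of the contact mean curvature $H-\theta\xi$ vanishes; reinserting this into (\ref{CCC}) forces the horizontal part of $X^{\perp}$ to vanish, i.e.\ the generator of the Sasakian scaling is tangent to $L$. Thus $L$ is self-similar under that scaling, which makes $Du$ positively homogeneous of degree one, hence linear, so that $D^2u$ is constant. Evaluating the self-shrinker normalization at the center gives $Du(0)=0$, and a Taylor expansion at the origin then yields $u(x)=u(0)+\frac12\la D^2u(0)x,x\ra$, as claimed. I expect this last step --- converting ``constant Legendrian angle'' into homogeneity through the contact geometry --- together with the sharpness demanded in the Liouville estimate, to be the two main obstacles; the former is where the $(2n+1)$-dimensional contact structure genuinely departs from the Lagrangian model of \cite{CCY12}.
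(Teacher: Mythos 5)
Your proposal is correct and follows essentially the same route as the paper's proof: drift-harmonicity $\mathcal{L}\theta=0$ of the Legendrian angle, the Cheng--Zhou-type finite weighted volume together with the $C_1r^n$ bound, a cutoff Caccioppoli argument forcing $\theta$ to be constant, and then quadratic rigidity by Euler homogeneity (the paper gets this last step by differentiating (\ref{eqn-LeSh2}) once and twice and applying Euler's theorem to $u_{ij}$, rather than via tangency of the scaling field, but the content is the same). Two small corrections to your commentary: the Gaussian weight must really be $e^{-f}$ with the corrected $f=\frac14\bigl(|X|^2-\frac14V^2\bigr)$ of Lemma \ref{lem1}, since for $f=|X|^2/4$ one has $\n f=\frac12 X^T+\frac{1}{16}\n V^2$ on a Legendrian graph and your asserted $\Delta_f\theta=0$ would pick up the extra term $-2\theta|\n\theta|^2$; and sharpness of the exponent $n$ is not actually indispensable --- any polynomial (indeed any subgaussian) volume growth, or merely finiteness of $\int_{\Sigma^n}e^{-f}d\mu$, makes the cutoff term $r^{-2}\int_{D_{2r}\setminus D_r}e^{-f}$ vanish in the limit.
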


Let $\D, \div$ and $d\mu$ be Laplacian, divergence and volume element on $L$. Following Colding--Minicozzi \cite{CM12}, we can also introduce the drift Laplacian 
\begin{equation*}\label{eqn-LeM456}\aligned
\mathcal{L} := \D - \frac{1}{2}\la X, \n (\cdot) \ra = e^{\frac{|X|^2}{4}}\div(e^{-\frac{|X|^2}{4}}\n(\cdot))
 \endaligned
\end{equation*}
on Legendrian self-shrinkers. 

The key step in proving Theorem \ref{thm1} is to estimate the volume growth of Legendrian self-shrinking graphs by computing the squared norm of the position vector. Notably, the phase function satisfies $\mathcal{L}\theta = 0$. This fact, combined with the volume growth estimate, allows the application of the integral method to establish Theorem \ref{thm1}.

\section{Preliminaries}

We consider the standard contact Euclidean space $(\mathbb{R}^{2n+1},\eta
,\Phi ,\xi ,g)$ with coordinates $(x_{1},...,x_{n},y_{1},...,y_{n},z)$, take
the contact $1$-form 
\begin{equation*}
	\eta =\frac{1}{2}dz-\frac{1}{4}\sum_{i=1}^{n}(y_{i}dx_{i}-x_{i}dy_{i}),
\end{equation*}%
the Reeb vector field $\xi =2\frac{\partial }{\partial z}$, the associated
metric 
\begin{equation*}
	g=\frac{1}{4}\sum_{i=1}^{n}(dx_{i}^{2}+dy_{i}^{2})+\eta \otimes \eta 
\end{equation*}%
and tensor 
\begin{equation*}
	\phi =\sum_{i=1}^{n}(-dx_{i}\otimes \frac{\partial }{\partial y_{i}}%
	+dy_{i}\otimes \frac{\partial }{\partial x_{i}}+\frac{x_{i}}{2}dx_{i}\otimes 
	\frac{\partial }{\partial z}+\frac{y_{i}}{2}dy_{i}\otimes \frac{\partial }{%
		\partial z}).
\end{equation*}%
We choose an orthonormal frame $\{E_{i}\}_{i=1}^{2n+1}$: 
\begin{align*}
	& E_{i}=2\frac{\partial }{\partial x_{i}}+y_{i}\frac{\partial }{\partial z};
	\\
	& E_{n+i}=-2\frac{\partial }{\partial y_{i}}+x_{i}\frac{\partial }{\partial z%
	}; \\
	& E_{2n+1}=\xi=2\frac{\partial }{\partial z}
\end{align*}%
such that $\Phi (E_{i})=E_{n+i}$ and $\Phi (\xi )=0$. Equivalently,  the natural frame $\{\frac{\p}{\p x_i}, \frac{\p}{\p y_i}, \frac{\p}{\p z}\}_{i=1}^{n}$ can be expressed by  $\{E_{i}\}_{i=1}^{2n+1}$ as

\begin{equation}\label{eqn-LeM2}
	\frac{\p}{\p x_i} =\frac{1}{2}E_i - \frac{1}{4}y_i \xi,  \quad \quad  
	\frac{\p}{\p y_i} = -\frac{1}{2}E_{n+i} + \frac{1}{4}x_i\xi,  \quad \quad
	\frac{\p}{\p z} =\frac{1}{2}\xi,
\end{equation}
For the covariant derivatives of our frame $\{E_{i}\}_{i=1}^{2n+1}$, it follows from\cite{CWZ24}, that

\begin{pro}
	\begin{align}\label{eqn-covariant}
		\overline{\n}_{E_k} E_k =& 0, \quad k=1,\cdot\cdot\cdot, 2n+1, \\
		\overline{\n}_{E_{i}}E_{n+i} =&  \xi = - \overline{\n}_{E_{n+i}}E_{i}, \quad i=1,\cdot\cdot \cdot, n, \\
		\overline{\n}_{E_i}E_j =& - \overline{\n}_{E_j}E_i = 0, \quad |i-j|\neq n, \\
		\overline{\n}_{E_i}\xi =& - E_{n+i}, \quad \overline{\n}_{E_{n+i}}\xi =  E_{i}, \quad i=1,\cdot\cdot\cdot, n.
	\end{align} 
\end{pro}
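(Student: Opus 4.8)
The plan is to derive all four groups of identities directly from the definition of the Levi--Civita connection $\overline{\n}$ of $g$, using the structure constants of the orthonormal frame $\{E_a\}_{a=1}^{2n+1}$. As a preliminary I would record that $\{E_a\}$ really is $g$-orthonormal: computing $\eta(E_i)=2\eta(\p_{x_i})+y_i\eta(\p_z)=-\tfrac12 y_i+\tfrac12 y_i=0$, and likewise $\eta(E_{n+i})=0$, $\eta(\xi)=1$, then feeding these into $g=\tfrac14\sum(dx_i^2+dy_i^2)+\eta\otimes\eta$ together with $dx_k(E_i)=2\delta_{ik}$ and $dy_k(E_{n+i})=-2\delta_{ik}$ yields $g(E_a,E_b)=\delta_{ab}$ (equivalently, invert the relations (\ref{eqn-LeM2}) and substitute). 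In particular every inner product $\la E_a,E_b\ra$ is constant, which is what makes the connection computation tractable.

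Second, I would compute every Lie bracket $[E_a,E_b]$. Writing each $E_a$ as a coordinate vector field and using $[\p_{x_i},\p_z]=0$, the only surviving contributions come from the $\p_z$-coefficients $y_i$ and $x_i$. A short calculation gives $[E_i,E_{n+i}]=2\p_z+2\p_z=4\p_z=2\xi$ for each $i$, while $[E_a,E_b]=0$ for every other pair; in particular all brackets with $\xi$ vanish because the coefficients $y_i,x_i$ are independent of $z$. Hence, writing $[E_a,E_b]=\sum_c c_{ab}^{\,c}E_c$, the only nonzero structure constants are
\[ c_{i,n+i}^{\,2n+1}=-c_{n+i,i}^{\,2n+1}=2,\qquad i=1,\dots,n. \]

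Third, since the frame is orthonormal the Koszul formula collapses (all $E_a\la E_b,E_c\ra$ terms vanish) to
\[ 2\la \overline{\n}_{E_a}E_b,E_c\ra = c_{ab}^{\,c}-c_{bc}^{\,a}+c_{ca}^{\,b}. \]
Reading this off against the single family $c_{i,n+i}^{\,2n+1}=2$ produces all the stated formulas at once. The diagonal identities $\overline{\n}_{E_k}E_k=0$ hold because no nonzero structure constant repeats an index, so no term survives. The pair $\overline{\n}_{E_i}E_{n+i}=\xi$ and $\overline{\n}_{E_{n+i}}E_i=-\xi$ come from the $c_{ab}^{\,c}$ term. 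The identities $\overline{\n}_{E_i}\xi=-E_{n+i}$ and $\overline{\n}_{E_{n+i}}\xi=E_i$ come from the $c_{ca}^{\,b}$ term with $b=2n+1$. Finally, for the non-conjugate horizontal pairs ($1\le i,j\le 2n$, $|i-j|\neq n$) all three Koszul terms vanish, giving $\overline{\n}_{E_i}E_j=0$.

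The only delicate point is the bookkeeping in this last step: because the lone nonzero structure constant $c_{i,n+i}^{\,2n+1}$ carries three pairwise-distinct indices $(i,n+i,2n+1)$, one must check, for each target pair $(a,b)$, which of the three cyclic terms can be activated, and confirm that the mixed non-conjugate derivatives receive no stray contribution from an accidental index collision. As an independent cross-check one can invoke the Sasakian identity $\overline{\n}_X\xi=-\Phi X$ together with $\Phi(E_i)=E_{n+i}$ and $\Phi(E_{n+i})=-E_i$ (the latter from $\Phi^2=-I+\eta\otimes\xi$ and $\eta(E_i)=0$), which reproduces the last line of the Proposition without any further computation.
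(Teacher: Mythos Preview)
Your argument is correct. The paper itself does not prove this Proposition; it simply records the formulas and attributes them to \cite{CWZ24}. Your self-contained verification via the Koszul formula is the natural way to fill this in, and all the intermediate claims check out: the frame is $g$-orthonormal, the only nonvanishing bracket is $[E_i,E_{n+i}]=2\xi$, and the reduced Koszul identity $2\la\overline{\n}_{E_a}E_b,E_c\ra=c_{ab}^{\,c}-c_{bc}^{\,a}+c_{ca}^{\,b}$ then yields each line of the Proposition exactly as you describe. Your Sasakian cross-check $\overline{\n}_X\xi=-\Phi X$ is also consistent with the explicit formula for $\phi$ given in the paper.

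One small remark worth making explicit in your write-up: the third line of the Proposition must be read with $i,j\in\{1,\dots,2n\}$, since $\overline{\n}_{\xi}E_i=-E_{n+i}\neq 0$ (this follows from your own Koszul computation, or from $[\xi,E_i]=0$ together with $\overline{\n}_{E_i}\xi=-E_{n+i}$ and torsion-freeness). You already restricted to horizontal indices in your last paragraph, so this is only a matter of stating the hypothesis clearly; the paper's subsequent applications of these formulas never differentiate in the $\xi$-direction, so the omission is harmless there.
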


Recall that $i:\Sigma^n\rightarrow $ $\mathbb{R}^{2n+1}$ is a special Legendrian
submanifold if and only if $\widetilde{i}:C(\Sigma^n)\rightarrow $ $\mathbb{C}^{n+1}
$ is a special Lagrangian cone in an almost Calabi-Yau $(n+1)$-fold. For the
smooth map $F=(F_{1},...,F_{n+1}):\mathbb{R}^{n}\rightarrow \mathbb{R}^{n+1},
$ define the graph over $\mathbb{R}^{n}$ in $\mathbb{R}^{n}\times \mathbb{R}%
^{n+1}$ 
\begin{equation*}
	\Sigma^n:=\mathrm{Graph}(F):\mathbb{R}^{n}\rightarrow \mathbb{R}^{n}\times \mathbb{R%
	}^{n+1}\subset \mathbb{C}^{n+1}
\end{equation*}%
by 
\begin{equation*}
	(x_{1},...,x_{n})\rightarrow
	(x_{1},...,x_{n},F_{1}(x_{1},...,x_{n}),...,F_{n+1}(x_{1},...,x_{n})).
\end{equation*}

\begin{lem}
	If $\Sigma^n$ is the entire \textbf{Legendrian} graph over $\mathbb{R}^{n}$ in $(%
	\mathbb{R}^{2n+1},\eta ,\xi )$ with $\eta =\frac{1}{2}(dz-\frac{1}{2}\sum
	(y_{i}dx_{i}-x_{i}dy_{i}))$ and $\xi =2\frac{\partial }{\partial z},$ then
	there exists a smooth function $u:\mathbb{R}^{n}\rightarrow \mathbb{R}$ so
	that $M$ admits a graphic representation in $\mathbb{R}^{2n+1}:$ 
	\begin{equation*}
		(x_{1},...,x_{n})\rightarrow (x_{1},...,x_{n},u_{1},...,u_{n},u-\frac{1}{2}%
		\la x,Du \ra).
	\end{equation*}
\end{lem}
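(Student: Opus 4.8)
The plan is to realize the Legendrian condition as the vanishing of the pullback of the contact form, read off the resulting first-order system for the graphing map $F=(F_1,\dots,F_{n+1})$, and recognize that this system is exactly the integrability condition forcing the horizontal components to be a gradient. Writing the graph as $(x_1,\dots,x_n)\mapsto(x_1,\dots,x_n,F_1(x),\dots,F_n(x),F_{n+1}(x))$ with $y_i=F_i$ and $z=F_{n+1}$, the pullbacks are $dy_i=\sum_j\frac{\p F_i}{\p x_j}\,dx_j$ and $dz=\sum_j\frac{\p F_{n+1}}{\p x_j}\,dx_j$. Substituting into $\eta=\frac12 dz-\frac14\sum_i(y_i\,dx_i-x_i\,dy_i)$ and imposing the Legendrian condition $i^*\eta=0$, the vanishing of the coefficient of each $dx_k$ gives the system
\begin{equation*}
\frac{\p F_{n+1}}{\p x_k}=\tfrac12 F_k-\tfrac12\sum_{i=1}^n x_i\,\frac{\p F_i}{\p x_k},\qquad k=1,\dots,n. \tag{$\star$}
\end{equation*}

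Next I would cross-differentiate $(\star)$. Applying $\frac{\p}{\p x_l}$ to the $k$-th equation and using both the symmetry $\frac{\p^2 F_{n+1}}{\p x_l\p x_k}=\frac{\p^2 F_{n+1}}{\p x_k\p x_l}$ and the symmetry of the second derivatives of each $F_i$, the second-order terms $\sum_i x_i\frac{\p^2 F_i}{\p x_k\p x_l}$ cancel in pairs, and one is left with
\begin{equation*}
\frac{\p F_k}{\p x_l}-\frac{\p F_l}{\p x_k}=\frac{\p F_l}{\p x_k}-\frac{\p F_k}{\p x_l},
\end{equation*}
that is, $\frac{\p F_k}{\p x_l}=\frac{\p F_l}{\p x_k}$ for all $k,l$. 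Hence the $1$-form $\sum_k F_k\,dx_k$ is closed, and since $\mathbb{R}^n$ is simply connected the Poincaré lemma yields a smooth function $u$ with $F_k=u_k$ for $k=1,\dots,n$; this is precisely the identification of $(F_1,\dots,F_n)$ with $Du$.

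Finally I would recover the last component. Substituting $F_k=u_k$ into $(\star)$ gives $\frac{\p F_{n+1}}{\p x_k}=\tfrac12 u_k-\tfrac12\sum_i x_i u_{ik}$, while a direct computation shows that $H:=u-\tfrac12\la x,Du\ra$ satisfies $\frac{\p H}{\p x_k}=\tfrac12 u_k-\tfrac12\sum_i x_i u_{ik}$ as well. Therefore $F_{n+1}-H$ has vanishing gradient and is a constant $c$; replacing $u$ by $u+c$ (which leaves $u_k=F_k$ unchanged) absorbs the constant and produces the asserted representation $F_{n+1}=u-\tfrac12\la x,Du\ra$. The computation is elementary throughout; the one step carrying genuine content is the cross-differentiation of $(\star)$, where one must check that the second-order terms cancel so that only the antisymmetric first-order part survives. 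This cancellation is exactly the analytic manifestation of the geometric fact noted above, namely that the Legendrian (contact) constraint corresponds to the Lagrangian/gradient integrability condition for the horizontal part of the graph.
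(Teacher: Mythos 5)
Your proposal is correct, and it is in fact more complete than the paper's own proof. The paper's argument begins by \emph{setting} $y_i=u_i$, i.e.\ it takes for granted that the middle components of a Legendrian graph are the gradient of some potential, and then only carries out the final step: pulling back $2\eta$ along the graph, reading off $z_j-\frac12(u_j-x_iu_{ij})=0$ from $\eta=0$, and integrating to $z=u-\frac12\la x,Du\ra$. Your cross-differentiation of the system $(\star)$ supplies exactly the step the paper leaves implicit: the symmetry of the second derivatives of $F_{n+1}$, combined with the cancellation of the terms $\sum_i x_i\,\p^2 F_i/\p x_k\p x_l$, yields $\p F_k/\p x_l=\p F_l/\p x_k$, so $\sum_k F_k\,dx_k$ is closed and the Poincar\'e lemma on $\RR^n$ produces the potential $u$ with $F_k=u_k$. (Equivalently, this is the statement that the projection of a Legendrian graph to $\RR^{2n}$ is an exact Lagrangian graph, hence a gradient graph; the paper presumably has this standard fact in mind.) You are also more careful than the paper about the constant of integration: your observation that replacing $u$ by $u+c$ shifts $u-\frac12\la x,Du\ra$ by exactly $c$, leaving $Du$ unchanged, is the right way to absorb it, whereas the paper's one-line integration silently normalizes it. All your computations check out — the coefficient extraction from $i^*\eta=0$ reproduces the paper's equation once $F_k=u_k$ is substituted — so the two proofs agree on the final step, with yours filling the genuine gap in the first.
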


\begin{proof}
	For $2\eta =dz-\frac{1}{2}\sum (y_{i}dx_{i}-x_{i}dy_{i}),y_{i}=u_{i},$ we
	have 
	\begin{equation*}
		2\eta =[z_{j}-\frac{1}{2}(u_{j}-x_{i}u_{ij})]dx_{j}.
	\end{equation*}%
	Since $\eta =0$ on $\Sigma^n$ in $\mathbb{R}^{2n+1},$ this implies $z_{j}-\frac{1}{2%
	}(u_{j}-x_{i}u_{ij})=0$ and then $z=u-\frac{1}{2}\la x,Du \ra.$
\end{proof}

Now for $X(x)=(x_{1},...,x_{n},u_{1},...,u_{n},u-\frac{1}{2}\la x,Du \ra),$ it is
easy to see that%
\begin{equation*}
	e_{i}:=X_{\ast }(\frac{\partial }{\partial x_{i}})=\frac{1}{2}%
	(E_{i}-u_{ki}E_{n+k}), \quad i=1,\cdot\cdot\cdot, n
\end{equation*}
are tangent vectors of $\Sigma^n$.

Thus the induced metric on $\Sigma^n$ is%
\begin{equation*}
	g_{ij}= \la X_{i},X_{j} \ra =\frac{1}{4}(\delta _{ij}+u_{ki}u_{kj}).
\end{equation*}

\begin{defi}
	Let $\Sigma^n$ be an entire special Legendrian graph with the Legendrian phase $%
	\theta $, if $u$ is a solution to the special Legendrian equation 
	\begin{equation*}\label{eqn-LeSh2b}
		\sum\limits_{i=1}^{n}\arctan \lambda _{i}=\theta
	\end{equation*}%
	for some constant phase over $\mathbb{R}^{n}$ and $\lambda _{i}$ are the
	eigenvalues of the Hessian $D^{2}u$.
\end{defi}

\begin{rem}
	The mean curvature vector $H$ of  $\Sigma^n$ and the Legendrian angle $\theta$ satisfy
	\begin{equation}\label{eqn-LeSh3}
		H=\phi \n \theta,
	\end{equation}
	for details, we refer to \cite{T16}.
\end{rem}

The metric will be expressed as 
\begin{equation*}
	\det (g_{jk})=\frac{1}{4}\prod (1+\lambda _{i}^{2}).
\end{equation*}

Now we consider \textbf{the entire Legendrian mean curvature flow}%
\begin{equation}\label{AA}
	\frac{\partial u}{\partial t}=\theta .  
\end{equation}

The evolution of the immersed map $F(x)=(Du(x),u(x)-\frac{1}{2}\la x,Du(x) \ra):%
\mathbb{R}^{n}\rightarrow \mathbb{R}^{n+1}$ is  
\begin{equation}\label{AAA}
	\frac{\partial X}{\partial t}=H-\theta \xi   
\end{equation}

\begin{defi}
	Let $\Sigma^n$ be the entire Legendrian self-similar solution : 
	$X(\cdot ,t):\Sigma^n\rightarrow \mathbb{R}^{2n+1}$.
	It is called a self-shrinker if 
	\begin{equation*}
	\Sigma^n_{t}:=\sqrt{-t}\Sigma^n_{-1}
	\end{equation*}%
	for $t<0.$
\end{defi}
 In  \cite{CWZ23} Chang--Wu--Chang proved  that an entire Legendrian self-similar solution must satisfy the equation
\begin{equation}  \label{eqn-LeSh}
	H-\theta\xi = -\frac{1}{2}X^\bot
\end{equation}
through the blow-up analysis.

\begin{rem}
	Taking product with $\xi$ on the two sides of the equation (\ref{eqn-LeSh}), then we can conclude that
	
	\begin{equation}\label{eqn-LeSh2}
		\sum_{i=1}^{n}\arctan \lambda _{i}(x)= \theta = \frac{1}{4} \left( u(x) -\frac{1}{2}x \cdot Du(x)\right).
	\end{equation}
\end{rem}

\vskip24pt

\section{Legendrian self-shrinkers}

\vskip8pt

 In this section, we derive a volume growth estimate for Legendrian self-shrinking graphs, from which a rigidity result follows via the integral method. We begin with the following lemma:

\vskip8pt

\begin{lem}\label{lem1}

Let $\Sigma^n:= \{ X=(x, Du(x), u-\frac{1}{2} x \cdot Du ) | x\in \mathbb R^n \}$ be an entire Legendrian gragh in $\mathbb R^{2n+1}$ satisfying (\ref{eqn-LeSh}). Let 
\[
V := u - \frac{1}{2} x\cdot Du,
\]
 and 
 \[
 f := \frac{1}{4}\left( |X|^2 - \frac{1}{4}V^2 \right).
 \]
Then we have
\begin{equation}\label{eqn-Lem1}\aligned
\n f =& \frac{1}{2}X^T, \\
\D f =& \frac{n}{2} + \frac{1}{2}\la X, H\ra.
\endaligned
\end{equation}

\end{lem}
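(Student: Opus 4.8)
The plan is to compute $\n f$ and $\D f$ directly in the graphical coordinates, using the tangent frame $e_i = X_\ast(\p/\p x_i)$ and the induced metric $g_{ij}=\frac14(\d_{ij}+u_{ki}u_{kj})$ recorded above. The starting observation is that $|X|^2$ and $V^2$ both have transparent first derivatives: since $V=u-\frac12\la x,Du\ra$ satisfies $\p_i V = -\frac12 x_k u_{ki}$ (the $u_i$ terms cancel), and since $\p_i(\tfrac12|X|^2)=\la X, X_i\ra = \la X, e_i\ra$, the two pieces of $f$ are arranged precisely so that the combination simplifies. First I would verify the gradient identity $\n f = \frac12 X^T$, where $X^T$ denotes the tangential projection of the position vector. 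Because $\n f$ is by definition the tangential vector with $\la \n f, e_i\ra = \p_i f$, this reduces to checking
\begin{equation*}
\p_i f = \tfrac14\bigl(\la X, 2e_i\ra - \tfrac12\cdot 2V\,\p_i V\bigr) = \tfrac12\la X, e_i\ra - \tfrac14 V\,\p_i V \stackrel{?}{=} \tfrac12\la X^T, e_i\ra = \tfrac12\la X, e_i\ra.
\end{equation*}
So the gradient claim is equivalent to the vanishing of the correction term $\frac14 V\,\p_i V$, i.e.\ to showing $V\,\p_i V$ contributes nothing to the tangential derivative. Using $\p_i V = -\frac12 x_k u_{ki}$ together with the explicit form of $e_i=\frac12(E_i - u_{ki}E_{n+i})$ and the expansion of $X$ in the frame $\{E_j,\xi\}$, I expect the $V$-term to cancel exactly against the $\xi$-component of $\la X, e_i\ra$, which is why the coefficient $\frac14$ in front of $V^2$ is chosen as it is. This bookkeeping of normal versus tangential components is the first place to be careful.

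For the Laplacian, I would use $\D f = g^{ij}(\p_i\p_j f - \Gamma_{ij}^k \p_k f)$, or more efficiently the coordinate-free formula $\D f = \sum_i \bigl(\bar\n_{e_i}\n f\bigr)^T$ after normalizing the frame. Differentiating $\n f = \frac12 X^T = \frac12(X - X^\bot)$ and taking the trace gives
\begin{equation*}
\D f = \tfrac12\sum_i \la \bar\n_{e_i} X, e_i\ra - \tfrac12\sum_i \la \bar\n_{e_i} X^\bot, e_i\ra,
\end{equation*}
where the frame is taken orthonormal. The first sum is $\frac12\sum_i\la e_i, e_i\ra = \frac{n}{2}$ since $\bar\n_{e_i}X = e_i$ (the position vector differentiates to the tangent vector). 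The second sum is the standard identity $\sum_i \la \bar\n_{e_i} X^\bot, e_i\ra = -\la X^\bot, H\ra = -\la X, H\ra$, obtained by moving the derivative off the normal field onto $e_i$ via $\la X^\bot, e_i\ra=0$ and recognizing the second fundamental form trace. Combining the two gives $\D f = \frac{n}{2}+\frac12\la X, H\ra$, as claimed.

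The main obstacle I anticipate is \emph{not} the structural computation but the explicit frame algebra in the first step: confirming that the tangential projection $\la X, e_i\ra$ produces exactly the term that cancels $\frac14 V\,\p_i V$ requires expanding $X=(x,Du,V)$ in the orthonormal basis $\{E_j,\xi\}$ via \eqref{eqn-LeM2} and using the covariant derivative relations in \eqref{eqn-covariant}. In particular one must track how the Reeb component $\frac14\bigl(|X|^2-\frac14 V^2\bigr)$ is tuned so that the $\xi$-direction contribution drops out, leaving a purely tangential gradient; getting the constants $\frac14$ and $\frac12$ to match is where an error would most easily creep in. Once $\n f=\frac12 X^T$ is established cleanly, the Laplacian identity follows formally from $\bar\n_{e_i}X=e_i$ and the definition of $H$ as the trace of the second fundamental form, so no further geometric input beyond Proposition \ref{eqn-covariant} should be needed.
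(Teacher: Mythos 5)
Your overall architecture (gradient identity first, then a trace computation for $\D f$) matches the paper's, but two of the identities you rely on are false in this geometry, and they sit exactly where you flagged the danger. First, a small slip: from the paper's (\ref{eqn-LeM4}), $\p_i V = \frac12(u_i - x_k u_{ki})$, not $-\frac12 x_k u_{ki}$; only half of the $u_i$ cancels. Second, and fatally for the argument as written: the ambient metric is the Sasakian metric $g=\frac14\sum(dx_i^2+dy_i^2)+\eta\otimes\eta$, which is not flat, and the position vector does \emph{not} satisfy $\overline{\n}_{e_i}X=e_i$. The paper's computation, using the connection relations (\ref{eqn-covariant}), gives in (\ref{eqn-LeMS12})
\[
\overline{\n}_{e_j}X = e_j - \frac14 V\left(E_{n+j}+u_{kj}E_k\right),
\]
so that $\p_j\left(\frac12|X|^2\right)=\la \overline{\n}_{e_j}X, X\ra = \la e_j, X\ra + \frac18 (V^2)_j$, not $\la e_j,X\ra$. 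Consequently your reduction of $\n f=\frac12 X^T$ to ``the vanishing of the correction term $\frac14 V\,\p_i V$'' is wrong in both directions: that term does not vanish (it is generically nonzero for non-quadratic $u$), and it is not supposed to --- the $-\frac14 V^2$ in the definition of $f$ exists precisely to cancel the \emph{nonzero} correction $\frac18(V^2)_j$ produced by the non-flat connection. If $V\p_iV$ really had to vanish, one could take $f=\frac14|X|^2$ and the lemma would be the standard Euclidean shrinker identity, which it is not. (A fully elementary repair of the gradient half does exist: since $\la X,\xi\ra=\frac12V$, one has $|X|^2-\frac14V^2=\frac14(|x|^2+|Du|^2)$, whence $\p_i f=\frac18(x_i+u_ku_{ki})=\frac12\la X,e_i\ra$ by expanding $X$ and $e_i$ in the frame; but this is a component computation, not the flat identity you invoked.)

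The same gap touches your Laplacian step. The first trace $\sum_i\la\overline{\n}_{e_i}X,e_i\ra=n$ (orthonormal frame) is true here, but not because $\overline{\n}_{e_i}X=e_i$: it holds because the correction $-\frac14V(E_{n+i}+u_{ki}E_k)$ happens to be \emph{normal} to $\Sigma^n$, which is the content of the paper's (\ref{eqn-LeMS15}) and must be checked from the frame algebra rather than assumed. Your second trace, $\sum_i\la\overline{\n}_{e_i}X^\bot,e_i\ra=-\la X^\bot,H\ra=-\la X,H\ra$, is valid as stated (it uses only metric compatibility and $\la X^\bot,e_i\ra=0$) and is in fact a cleaner route than the paper's, which expands $\D|X|^2$ with Christoffel symbols and cancels the $(\n_{e_i}e_j)V^2$ terms explicitly in (\ref{eqn-LeM27})--(\ref{eqn-LeM29}). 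So the skeleton is sound and the Laplacian step is salvageable, but as written the proof fails at the gradient identity; the necessary repair is exactly the covariant-derivative computation $\overline{\n}_{e_j}X=e_j-\frac14V(E_{n+j}+u_{kj}E_k)$ (or the equivalent direct component verification above), which is the actual content of the paper's proof.
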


\begin{proof}
An easy calculation implies
\begin{equation}\label{eqn-LeM4}\aligned
V_i = u_i - \frac{1}{2} \d_{ki} u_k - \frac{1}{2}x_k u_{ki} = \frac{1}{2}\left( u_i - x_ku_{ki} \right).
\endaligned
\end{equation}
By (\ref{eqn-LeM2}) and  (\ref{eqn-LeM4}), we obtain the tangent vectors $\{e_i\}$ of $\Sigma^n$ 
\begin{equation}\label{eqn-tangent}\aligned
e_i := X_*(\frac{\p}{\p x_i}) = & \frac{\p}{\p x_i} + u_{ki} \frac{\p}{\p y_k} + \left( \frac{1}{2}u_i -\frac{1}{2}x_k u_{ki} \right) \frac{\p}{\p z} \\
=& \frac{1}{2}E_i - \frac{1}{4}u_i \xi + u_{ki} \left( -\frac{1}{2}E_{n+k} + \frac{1}{4}x_k \xi \right) +\left( \frac{1}{2}u_i -\frac{1}{2}x_k u_{ki} \right) \cdot \frac{1}{2}\xi \\
=& \frac{1}{2}E_i - \frac{1}{2}u_{ki}E_{n+k}.
\endaligned
\end{equation}
From the expression of $X$, we get
\begin{equation}\label{eqn-LeM8}\aligned
X=& x_i \frac{\p}{\p x_i} + u_i \frac{\p}{\p y_i} + V \frac{\p}{\p z} \\
=& x_i \left( \frac{1}{2}E_i - \frac{1}{4}u_i \xi \right) 
 + u_i \left( -\frac{1}{2}E_{n+i} + \frac{1}{4}x_i \xi \right)  + V\cdot \frac{1}{2}\xi \\
=& \frac{1}{2}x_i E_i  -\frac{1}{2}u_i E_{n+i}  +\frac{1}{2}V\xi.
\endaligned
\end{equation}
Using (\ref{eqn-covariant}) and (\ref{eqn-LeM4}), direct computation gives
\begin{equation}\label{eqn-LeMS9}\aligned
\overline{\n}_{e_j}(\frac{1}{2}x_i E_i) = & \frac{1}{2}\left( \d_{ij}E_i +x_i \overline{\n}_{ \frac{1}{2}E_j - \frac{1}{2}u_{kj}E_{n+k}} E_i \right) \\
=&  \frac{1}{2}\left[ \d_{ij}E_i + \frac{1}{2} x_i \left(\overline{\n}_{ E_j} E_i - u_{kj}\overline{\n}_{E_{n+k}} E_i \right) \right]\\
=&  \frac{1}{2}E_j + \frac{1}{4} x_i u_{ij} \xi,
\endaligned
\end{equation}
\begin{equation}\label{eqn-LeMS10}\aligned
\overline{\n}_{e_j}(-\frac{1}{2}u_i E_{n+i}) = & -\frac{1}{2}\left( u_{ij}E_{n+i} +u_i \overline{\n}_{ \frac{1}{2}E_j - \frac{1}{2}u_{kj}E_{n+k}} E_{n+i} \right) \\
=&  -\frac{1}{2}\left[ u_{ij}E_{n+i} + \frac{1}{2} u_i \left(\overline{\n}_{ E_j} E_{n+i} - u_{kj}\overline{\n}_{E_{n+k}} E_{n+i} \right) \right]\\
=&  -\frac{1}{2}u_{ij}E_{n+i} - \frac{1}{4} u_j \xi,
\endaligned
\end{equation}
and
\begin{equation}\label{eqn-LeMS11}\aligned
\overline{\n}_{e_j}(\frac{1}{2}V\xi) = & \frac{1}{2}\left( V_j \xi +V \overline{\n}_{ \frac{1}{2}E_j - \frac{1}{2}u_{kj}E_{n+k}} \xi \right) \\
=&  \frac{1}{4}\left[ (u_j - x_k u_{kj})\xi + V \left(\overline{\n}_{ E_j} \xi - u_{kj}\overline{\n}_{E_{n+k}} \xi \right) \right]\\
=&  \frac{1}{4}(u_j - x_k u_{kj})\xi  - \frac{1}{4}V(E_{n+j}+u_{kj}E_k).
\endaligned
\end{equation}
From (\ref{eqn-tangent})--(\ref{eqn-LeMS11}), we obtain
\begin{equation}\label{eqn-LeMS12}\aligned
\overline{\n}_{e_j}X = & \overline{\n}_{e_j}(\frac{1}{2}x_i E_i) + \overline{\n}_{e_j}(-\frac{1}{2}u_i E_{n+i}) + \overline{\n}_{e_j}(\frac{1}{2}V\xi) \\
=&  \frac{1}{2}E_j + \frac{1}{4} x_i u_{ij} \xi -\frac{1}{2}u_{ij}E_{n+i} - \frac{1}{4} u_j \xi  \\
&+ \frac{1}{4}(u_j - x_k u_{kj})\xi  - \frac{1}{4}V(E_{n+j}+u_{kj}E_k) \\
=& e_j  - \frac{1}{4}V(E_{n+j}+u_{kj}E_k).
\endaligned
\end{equation}
By (\ref{eqn-LeM4}), (\ref{eqn-LeM8}) and (\ref{eqn-LeMS12}), we derive
\begin{equation}\label{eqn-LeMS13}\aligned
\la \overline{\n}_{e_j}X, X \ra = & \left<  e_j  - \frac{1}{4}V(E_{n+j}+u_{kj}E_k), \frac{1}{2}x_i E_i  -\frac{1}{2}u_i E_{n+i}  +\frac{1}{2}V\xi \right> \\
=& \la e_j, X \ra + \frac{1}{8}V(u_j-x_i u_{ij}) \\
=& \la e_j, X \ra + \frac{1}{8}(V^2)_j.
\endaligned
\end{equation}
It follows
\begin{equation}\label{eqn-LeMS14}\aligned
\overline{\n}_{e_j}|X|^2  =  2\la \overline{\n}_{e_j}X, X \ra 
= 2\la e_j, X \ra + \frac{1}{4}\overline{\n}_{e_j}V^2.
\endaligned
\end{equation}
Thus
\begin{equation}\label{eqn-LeM24}\aligned
\n |X|^2 = & (\overline{\n} |X|^2)^T  = g^{ij} \la \overline{\n} |X|^2, e_i \ra e_j = g^{ij} (\overline{\n}_{e_i} |X|^2) e_j \\
=&  g^{ij} \left( 2\la X, e_i \ra + \frac{1}{4}\overline{\n}_{e_i}V^2 \right) e_j = 2X^T +\frac{1}{4}\n V^2.
 \endaligned
\end{equation}
Namely, 
\[
\n f = \frac{1}{2}X^T.
\]
Direct computation gives us
\begin{equation}\label{eqn-LeM25}\aligned
\D |X|^2 = & g^{ij}\left( e_ie_j |X|^2 - (\n_{e_i}e_j)|X|^2 \right) \\
=& g^{ij} \left( 2\la \overline{\n}_{e_i}X, e_j \ra + 2 \la X, \overline{\n}_{e_i}e_j \ra + \frac{1}{4}e_ie_j V^2 \right) - g^{ij}(\n_{e_i}e_j) |X|^2.
 \endaligned
\end{equation}
By (\ref{eqn-tangent}) and (\ref{eqn-LeMS12}), we derive
\begin{equation}\label{eqn-LeMS15}\aligned
\la \overline{\n}_{e_i}X, e_j \ra = & \left<  e_i  - \frac{1}{4}V(E_{n+i}+u_{ki}E_k), e_j\right> \\
=& \la e_i, e_j \ra - \frac{1}{4}V\left< E_{n+i}+u_{ki}E_k, \frac{1}{2}E_j - \frac{1}{2}u_{lj}E_{n+l} \right> \\
=& \la e_i, e_j \ra -\frac{1}{4}V \left( -\frac{1}{2}u_{lj}\d_{il} +\frac{1}{2}u_{ki}\d_{kj} \right) \\
=& \la e_i, e_j \ra.
 \endaligned
\end{equation}
Let 
\begin{equation*}\aligned
\n_{e_i} e_j =\Gamma^k_{ij}e_k.
 \endaligned
\end{equation*}
Then from (\ref{eqn-LeMS12}), we get
\begin{equation*}\aligned
\overline{\n}_{\n_{e_i} e_j}X =&\Gamma^k_{ij}\overline{\n}_{e_k}X = \Gamma^k_{ij} \left( e_k - \frac{1}{4}V(E_{n+k}+ u_{lk}E_l) \right)\\
=& \Gamma^k_{ij} e_k - \frac{1}{4}\Gamma^k_{ij} V(E_{n+k}+ u_{lk}E_l) \\
=& \n_{e_i}e_j - \frac{1}{4}\Gamma^k_{ij} V(E_{n+k}+ u_{lk}E_l).
\endaligned
\end{equation*}
Notice that by (\ref{eqn-LeM4}) and (\ref{eqn-LeM8}),
\begin{equation*}\aligned
& \left< \frac{1}{4}\Gamma^k_{ij} V(E_{n+k}+ u_{lk}E_l), X \right>  \\
=& \frac{1}{4}\Gamma^k_{ij}V  \left< E_{n+k}+ u_{lk}E_l, \frac{1}{2}(x_rE_r - u_rE_{n+r} +\frac{1}{2}V\xi)  \right> \\
=& \frac{1}{8}\Gamma^k_{ij}V (-u_k+x_l u_{kl}) = - \frac{1}{8}\Gamma^k_{ij}e_k(V^2) = - \frac{1}{8} (\n_{e_i}e_j) V^2.
\endaligned
\end{equation*}
The above two equalities imply
\begin{equation}\label{eqn-LeM27}\aligned
(\n_{e_i}e_j)|X|^2 = &2\la \overline{\n}_{\n_{e_i}e_j}X, X \ra 
= 2\la \n_{e_i}e_j, X \ra + \frac{1}{4} (\n_{e_i}e_j) V^2.
 \endaligned
\end{equation}
Hence
\begin{equation}\label{eqn-LeM28}\aligned
&2g^{ij}\la X, \overline{\n}_{e_i}e_j \ra - g^{ij}(\n_{e_i}e_j)|X|^2 \\
= &2g^{ij}  \la X, \overline{\n}_{e_i}e_j - \n_{e_i}e_j \ra -\frac{1}{4}g^{ij} (\n_{e_i}e_j) V^2 \\
=& 2\la X, H \ra  -\frac{1}{4}g^{ij} (\n_{e_i}e_j) V^2.
 \endaligned
\end{equation}
Substituting (\ref{eqn-LeMS15}) and (\ref{eqn-LeM28}) into (\ref{eqn-LeM25}), we obtain
\begin{equation}\label{eqn-LeM29}\aligned
\D |X|^2 = 2n+2\la X, H \ra + \frac{1}{4}\D V^2.
 \endaligned
\end{equation}
That is,
\[
\D f = \frac{n}{2} + \frac{1}{2}\la X, H\ra.
\]
\end{proof}

\begin{rem}

Notice that for the Lagrangian case, the last component $V := u - \frac{1}{2} x\cdot Du$ is zero.

\end{rem}

\begin{lem}\label{lem2}

Let $\Sigma^n:= \{ X=(x, Du(x), u-\frac{1}{2} x \cdot Du ) | x\in \mathbb R^n \}$ be an entire Legendrian gragh in $\mathbb R^{2n+1}$ satisfying (\ref{eqn-LeSh}). Let 
\[
V := u - \frac{1}{2} x\cdot Du,
\]
 and 
 \[
 f := \frac{1}{4}\left( |X|^2 - \frac{1}{4}V^2 \right).
 \]
Denote
\[
D_r := \{ p \in \Sigma^n | 2\sqrt f \leq r \}.
\]Then the weighted volume of $\Sigma^n$ is finite, namely,
\[
\int_{\Sigma^n}e^{-f}d\mu < +\infty 
\]
and 
\[
V(r):= {\rm Vol}(D_r) = \int_{D_r} d\mu \leq C_1 r^n
\]
for $r\geq 1$, where $C_1$ is a positive constant depending only on $\int_{\Sigma^n}e^{-f}d\mu$.

\end{lem}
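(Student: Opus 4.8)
The plan is to reduce the whole statement to a single Bakry--Émery type identity for $f$ together with a matching gradient bound, and then run divergence/coarea estimates in the spirit of Colding--Minicozzi \cite{CM12} and the self-shrinker volume argument of Cheng--Zhou.

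First I would compute $\mathcal{L}f$. By Lemma \ref{lem1}, $\mathcal{L}f = \D f - \frac12\la X,\n f\ra = \D f - \frac14|X^T|^2$. Substituting $\D f = \frac n2 + \frac12\la X,H\ra$ and evaluating $\la X,H\ra$ from the self-shrinker equation (\ref{eqn-LeSh}): since $H = \theta\xi - \frac12 X^\perp$, $\la X,\xi\ra = \frac12 V$ (from (\ref{eqn-LeM8})), and $\theta = \frac14 V$ (from (\ref{eqn-LeSh2})), one finds $\la X,H\ra = \frac18 V^2 - \frac12|X^\perp|^2$. Collecting terms and using $f = \frac14(|X|^2 - \frac14 V^2)$ gives the clean identity
\[ \mathcal{L}f = \frac n2 - f. \]
Next, because $\xi$ is a unit normal to the Legendrian graph (by (\ref{eqn-tangent}), $\la e_i,\xi\ra = 0$), the normal part of $X$ dominates its $\xi$-component: $|X^\perp|^2 \ge \la X,\xi\ra^2 = \frac14 V^2$, so $|X^T|^2 = |X|^2 - |X^\perp|^2 \le 4f$ and hence
\[ |\n f|^2 = \tfrac14|X^T|^2 \le f. \]
In particular $\rho := 2\sqrt f$ is $1$-Lipschitz, and since $f = \frac{1}{16}(|x|^2 + |Du|^2)$ is proper, $\rho$ is a proper exhaustion; thus each $D_r = \{\rho \le r\}$ is compact with smooth boundary for a.e.\ $r$, on which the outward unit normal is $\n f/|\n f|$.

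For the weighted volume I would run a flux monotonicity. Set $F(r) := \int_{\partial D_r}e^{-f}|\n f|\,dA$. The divergence theorem together with $\mathcal{L}f = \frac n2 - f$ gives, for $r_0 \le a < b$ with $r_0 := \sqrt{2n}$,
\[ F(b) - F(a) = \int_{D_b\setminus D_a}\left(\tfrac n2 - f\right)e^{-f}\,d\mu \le 0, \]
because $f = \rho^2/4 \ge n/2$ on $\Sigma^n\setminus D_{r_0}$. Hence $F$ is non-increasing and bounded below by $0$ on $[r_0,\infty)$, so it converges and $\int_{\Sigma^n\setminus D_{r_0}}(f - \frac n2)e^{-f}\,d\mu \le F(r_0) < \infty$. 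On $\{f\ge n\}$ one has $f - \frac n2 \ge \frac f2$, which controls $\int_{\{f\ge n\}}e^{-f}\,d\mu$; adding the integral over the compact set $\{f<n\} = D_{2\sqrt n}$ yields $\int_{\Sigma^n}e^{-f}\,d\mu < \infty$.

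The main obstacle is the polynomial bound $V(r)\le C_1 r^n$, which does \emph{not} follow from finiteness of the weighted volume alone (that only gives $V(r)\le e^{r^2/4}\int e^{-f}$). Here I would adapt the Cheng--Zhou self-shrinker volume-growth argument to $u := 4f$, which by the identity satisfies $\D u = 2n - u + \frac14|\n u|^2$ with $|\n u| \le 2\sqrt u$. Integrating $\D u$ over $D_r$ and using $\frac14|\n u|^2 = |X^T|^2 \le u$ gives $\int_{\partial D_r}|X^T|\,dA \le nV(r)$; pairing this with the coarea identity $V'(r) = r\int_{\partial D_r}|X^T|^{-1}dA$ via Cauchy--Schwarz, $r\big(\mathrm{Area}(\partial D_r)\big)^2 \le nV(r)V'(r)$, points toward a differential inequality $rV'(r)\le nV(r)+(\text{error})$ and hence $V(r)\le V(r_0)(r/r_0)^n$. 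The genuine difficulty, exactly where the Cheng--Zhou scheme must work hardest, is the degeneracy of $|\n f| = \frac12|X^T|$ on the set where the tangent plane is nearly orthogonal to the position vector (here, nearly $\xi$-vertical), since there the coarea weight $|\n\rho|^{-1}$ blows up and the naive Cauchy--Schwarz cannot be reversed; this is handled by their cutoff/iteration rather than directly. Finally, I would convert the baseline $V(r_0) = \mathrm{Vol}(D_{r_0}) \le e^{r_0^2/4}\int_{\Sigma^n}e^{-f}\,d\mu$ into the stated dependence of $C_1$ on $\int_{\Sigma^n}e^{-f}\,d\mu$.
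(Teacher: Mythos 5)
Your verification of the three structural facts --- the identity $\mathcal{L}f=\frac n2-f$ (the paper's (\ref{eqn-LeM33})--(\ref{eqn-LeM34})), the gradient bound $|\n f|^2\le f$ (the paper's (\ref{eqn-LeM35})), and nonnegativity plus properness of $f$ --- coincides with the paper's proof down to the same cancellations; your route to $|X^\perp|^2\ge\frac14V^2$ via the unit normal $\xi$ is equivalent to the paper's route via $|X^N|^2=4|H|^2+4\theta^2$, and your use of $\theta=\frac14V$ is the paper's (\ref{eqn-LeM31}). At exactly this point the paper stops: it invokes Theorem 1.1 of Cheng--Zhou \cite{CZ13}, whose hypotheses are precisely these three facts, and obtains \emph{both} conclusions (finite weighted volume and $V(r)\le C_1r^n$) from that citation. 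So had you concluded by citing that theorem, your proof would match the paper's; your flux-monotonicity argument for $\int_{\Sigma^n}e^{-f}d\mu<+\infty$ is a correct, self-contained substitute for that half (modulo working at regular values of $\rho=2\sqrt f$ via Sard, which you note).

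The genuine gap is in the $V(r)\le C_1r^n$ half, which you do not complete, and whose sketched mechanism would fail as written. Your Cauchy--Schwarz step gives $r\,\mathrm{Area}(\partial D_r)^2\le nV(r)V'(r)$, which bounds $V'$ from \emph{below}; no inequality of this orientation can yield the needed $rV'(r)\le nV(r)+\mathrm{error}$, and there is no ``cutoff/iteration'' in \cite{CZ13} that reverses it. The actual Cao--Zhou/Cheng--Zhou device avoids Cauchy--Schwarz entirely: since $|\n f|=\frac r2|\n\rho|$ on $\partial D_r$, one writes $\int_{\partial D_r}|\n f|\,dA=\frac2r\int_{\partial D_r}\frac{|\n f|^2}{|\n\rho|}\,dA$ and uses the pointwise identity $|\n f|^2=f-|H|^2$ --- which holds in this Legendrian setting too, because in your own computation $\la X,H\ra=\frac18V^2-\frac12|X^\perp|^2$ the $\theta^2$-terms cancel, giving $\D f=\frac n2-|H|^2$. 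Setting $\chi(r):=\int_{D_r}|H|^2d\mu$, the divergence theorem gives $\int_{\partial D_r}|\n f|\,dA=\frac n2V(r)-\chi(r)$, while the coarea formula converts the boundary integral into $\frac r2V'(r)-\frac2r\chi'(r)$; this yields the \emph{exact} relation $rV'(r)-nV(r)=\frac4r\chi'(r)-2\chi(r)$, from which $V(r)\le Cr^n$ follows by integrating $(r^{-n}V)'$ and using that $\chi$ is nonnegative, nondecreasing, and satisfies $\chi\le\frac n2V$. In particular, the degeneracy of $|\n\rho|$ that you flag as ``the genuine difficulty'' is harmless here, because the weight $1/|\n\rho|$ only ever appears multiplied by $|\n f|^2$, which vanishes at the same rate; your diagnosis, and the proposed differential inequality, are both off target. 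Since you did verify exactly the hypotheses of Theorem 1.1 of \cite{CZ13}, the gap is reparable by citation --- which is what the paper does --- but the self-contained route you sketch would not close.
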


\begin{proof}

By the expression of $X$, we see that
\[
\la X, \xi \ra = \frac{1}{2}V.
\]
On the other hand, from the Legendrian self-shrinker equation,
\[
\la X, \xi \ra = \la X^N, \xi \ra = -2 \la H-\theta\xi, \xi  \ra =2\theta.
\]
Thus we have 
\begin{equation}\label{eqn-LeM31}\aligned
V= 4\theta.
 \endaligned
\end{equation}
Then by Lemma \ref{lem1} and the Legendrian self-shrinker equation, we derive
\begin{equation}\label{eqn-LeM33}\aligned
\D_f f:=& \D f - \la \n f, \n f \ra \\
 =&  \frac{n}{2} + \frac{1}{2}\la X, H\ra - \frac{1}{4}\la X^T, X^T \ra \\
 =& \frac{1}{4} \left( 2n+2 \la X, \theta\xi -\frac{1}{2}X^N \ra - |X^T|^2 \right) \\
 =& \frac{n}{2}-\frac{1}{4}|X|^2 + \theta^2.
 \endaligned
\end{equation}
It follows
\begin{equation}\label{eqn-LeM34}\aligned
\D_f f + f =  \frac{n}{2}.
 \endaligned
\end{equation}
From the Legendrian self-shrinker equation, 
\[
|X^N|^2=\la X^N, X^N \ra = \la -2(H-\theta \xi), -2(H-\theta\xi) \ra = 4|H|^2 + 4\theta^2.
\]
Then 
\begin{equation}\label{eqn-LeM35}\aligned
|\n f|^2 =& \frac{1}{4}|X^T|^2 = \frac{1}{4} \left( |X|^2 - |X^N|^2 \right) = \frac{1}{4}|X|^2 - |H|^2 - \theta^2 \\
\leq & \frac{1}{4}|X|^2  - \theta^2 = \frac{1}{4} \left( |X|^2 - \frac{1}{4}V^2 \right) = f.
 \endaligned
\end{equation}
From (\ref{eqn-LeM8}), we know that
\[
f= \frac{1}{16}(x_i^{2}+u_{i}^2) \geq 0.
\]
The equalities (\ref{eqn-LeM31}), the definition of $f$ and the fact that $\theta$ is bounded imply that the function $f$ is proper.
Thus by Theorem 1.1 in \cite{CZ13}, $\Sigma^n$ has finite weighted volume, i.e., $\int_{\Sigma^n} e^{-f}d\mu < +\infty$ and 
\[
V(r):= {\rm Vol}(D_r) = \int_{D_r} d\mu \leq C_1 r^n
\]
for $r\geq 1$, where $C_1$ is a positive constant depending only on $\int_{\Sigma^n}e^{-f}d\mu$.

\end{proof}

\begin{thm}
If $u$ is an entire smooth solution to the equation (\ref{eqn-LeSh2}) in $\mathbb R^n$, then $u(x)$ is the quadratic polynomial 
$u(0)+\frac{1}{2}\la D^2 u(0)x, x \ra$.

\end{thm}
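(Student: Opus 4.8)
The plan is to show that the Legendrian phase $\theta$ is forced to be constant, and then to integrate the resulting first order equation for $u$ directly.

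\textbf{First}, I would observe that the drift Laplacian $\mathcal{L}$ is exactly the weighted Laplacian of $f$. For any function $\psi$ on $\Sigma^n$ the intrinsic gradient $\n\psi$ is tangential, so Lemma \ref{lem1} gives $\la X,\n\psi\ra=\la X^T,\n\psi\ra=2\la\n f,\n\psi\ra$, whence
\[
\mathcal{L}\psi=\D\psi-\frac12\la X,\n\psi\ra=\D\psi-\la\n f,\n\psi\ra=e^{f}\div\!\left(e^{-f}\n\psi\right).
\]
This is the crucial structural remark: $\mathcal{L}$ is self-adjoint with respect to the weighted measure $e^{-f}d\mu$, which is what makes the integral method run. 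As recorded in the Introduction, and as one checks from $H=\phi\n\theta$ (see (\ref{eqn-LeSh3})) together with the self-shrinker relation (\ref{eqn-LeSh}), the phase satisfies $\mathcal{L}\theta=0$. I would also use two boundedness facts: $\theta=\sum_i\arctan\lambda_i$ obeys $|\theta|<\frac{n\pi}{2}$, and by (\ref{eqn-LeM31}) one has $V=4\theta$.

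\textbf{Next} I would run a Liouville-type argument. Fix a smooth $\psi:[0,\infty)\to[0,1]$ with $\psi\equiv1$ on $[0,1]$, $\psi\equiv0$ on $[2,\infty)$ and $|\psi'|\le2$, and set $\psi_r:=\psi(2\sqrt f/r)$. Since $f$ is proper (Lemma \ref{lem2}), $\psi_r$ is compactly supported in $D_{2r}$ and equals $1$ on $D_r$; and since $|\n f|^2\le f$ by (\ref{eqn-LeM35}), one has $|\n\psi_r|\le\frac{2}{r}$ on the annulus $D_{2r}\setminus D_r$. Testing $\mathcal{L}\theta=0$ against $\psi_r^2\theta$ and integrating by parts,
\[
0=\int_{\Sigma^n}\psi_r^2\,\theta\,\mathcal{L}\theta\,e^{-f}d\mu=-\int_{\Sigma^n}\la\n\theta,\n(\psi_r^2\theta)\ra e^{-f}d\mu,
\]
so expanding $\n(\psi_r^2\theta)$ and applying Young's inequality yields
\[
\frac12\int_{\Sigma^n}\psi_r^2|\n\theta|^2 e^{-f}d\mu\le2\int_{\Sigma^n}\theta^2|\n\psi_r|^2 e^{-f}d\mu\le\frac{C}{r^2}\int_{D_{2r}\setminus D_r}e^{-f}d\mu.
\]
Because $\int_{\Sigma^n}e^{-f}d\mu<+\infty$ by Lemma \ref{lem2}, the right-hand side tends to $0$ as $r\to\infty$, and letting $r\to\infty$ forces $\int_{\Sigma^n}|\n\theta|^2 e^{-f}d\mu=0$, hence $\n\theta\equiv0$ and $\theta$ is constant.

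\textbf{Finally}, with $\theta$ constant, $V=4\theta$ is a constant $c$, so $u-\frac12\,x\cdot Du=c$. Writing $w:=u-c$ this reads $x\cdot Dw=2w$, Euler's relation for degree-two homogeneity, so $w(tx)=t^2w(x)$. Differentiating twice shows each $\p_i\p_j w$ is homogeneous of degree $0$ and continuous, hence constant; thus $w$ is a quadratic polynomial, and with $w(0)=0$, $Dw(0)=0$ one gets $w(x)=\frac12\la D^2w(0)x,x\ra$. Since $c=u(0)$ and $D^2w(0)=D^2u(0)$, this is precisely $u(x)=u(0)+\frac12\la D^2u(0)x,x\ra$. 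The main obstacle is Step 2: one must produce a cutoff whose gradient decays like $1/r$ while its support exhausts $\Sigma^n$ with vanishing weighted tail, and this is exactly where the geometry enters — properness of $f$ (so $D_{2r}$ is compact and no genuine boundary term appears), the bound $|\n f|^2\le f$ from (\ref{eqn-LeM35}) (so $|\n\psi_r|\le2/r$), and finite weighted volume (so the annular integral dies). Supplying the identity $\mathcal{L}\theta=0$, which drives the whole argument, is the other substantive ingredient.
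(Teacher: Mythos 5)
Your proposal is correct in outline and follows essentially the same route as the paper: a cutoff/Caccioppoli argument forcing $\theta$ to be constant, fed by the volume information of Lemma \ref{lem2}, followed by Euler's homogeneity relation to conclude that $u$ is quadratic. The one genuine gap is that you never establish $\mathcal{L}\theta=0$: this identity is the analytic heart of the matter and occupies roughly half of the paper's proof. The statement of it in the introduction is a preview of this very theorem's argument, not an independent reference, and in the Legendrian setting it is not the routine Lagrangian computation. One must differentiate $e_j(\theta)=\la H,\phi e_j\ra$ along the shrinker, and the contact corrections matter: the paper uses $(\overline{\n}_{e_i}\phi)e_j=\la e_i,e_j\ra\,\xi$ and the $V$-dependent terms in $\overline{\n}_{e_j}X=e_j-\frac14 V(E_{n+j}+u_{kj}E_k)$, and the conclusion rests on the exact cancellation of the $\frac{n}{4}V$ term arising from $\la \overline{\n}_{e_i}X,\phi e_j\ra=-\frac12 V\la e_i,e_j\ra$ against the $\frac12 V\la e_i,e_j\ra$ term produced by $\overline{\n}_{e_i}(\phi e_j)$, before the shrinker equation converts the remaining second fundamental form terms into $\frac12\la X,\n\theta\ra$. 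Writing ``as one checks'' does not discharge this step; you correctly name the ingredients ($H=\phi\n\theta$ and (\ref{eqn-LeSh})) but supply none of the computation (\ref{eqn-LeM40})--(\ref{eqn-LeM45}).

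Where your argument does diverge from the paper, it is actually sharper. The paper integrates against $e^{-|X|^2/4}$ and closes the estimate with the polynomial bound ${\rm Vol}(D_{2r})\leq C_1(2r)^n$ multiplied by the factor $e^{-r^2/4}$ available on $D_{2r}\backslash D_r$; you integrate against $e^{-f}$ and need only the finite weighted volume $\int_{\Sigma^n}e^{-f}d\mu<\infty$, which is the other half of Lemma \ref{lem2}. Your choice of weight is in fact the correct self-adjoint one: since $\n f=\frac12 X^T$ and intrinsic gradients are tangential, $\mathcal{L}\psi=e^{f}\div(e^{-f}\n\psi)$ holds exactly, whereas by (\ref{eqn-LeM24}) one has $\n|X|^2=2X^T+\frac14\n V^2$ on a Legendrian graph, so that $e^{|X|^2/4}\div(e^{-|X|^2/4}\n\theta)=\mathcal{L}\theta-2\theta|\n\theta|^2$ (using $V=4\theta$); the weight $e^{-|X|^2/4}$ is exactly self-adjoint for $\mathcal{L}$ only in the Lagrangian case $V\equiv 0$. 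Your cutoff $\psi_r=\psi(2\sqrt f/r)$ with $|\n f|^2\leq f$ from (\ref{eqn-LeM35}) also makes precise the bound $|\n\eta|\leq C_2/r$ that the paper merely asserts. The final step is the same as the paper's: constancy of $\theta$ gives $x\cdot Dw=2w$ for $w=u-4\theta$, degree-$0$ homogeneity of the second derivatives, and smoothness at the origin forces $u(x)=u(0)+\frac12\la D^2u(0)x,x\ra$.
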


\begin{proof}

Direct computation gives us
\begin{equation*}\aligned
e_j(\theta) = \la e_j, \n \theta \ra = \la \phi e_j, \phi \n\theta \ra = \la H, \phi e_j \ra \\
 \endaligned
\end{equation*}
Then by the Legendrian self-shrinker equation,
\begin{equation*}\aligned
e_ie_j(\theta) = & e_i \la H, \phi e_j \ra = e_i \la 2\theta\xi - \frac{1}{2}X^N, \phi e_j \ra \\
=& -\frac{1}{2}e_i \la X^N, \phi e_j \ra = -\frac{1}{2}e_i \la X, \phi e_j \ra \\ 
=& -\frac{1}{2}\la \overline{\n}_{e_i}X, \phi e_j \ra - \frac{1}{2}\la X, \overline{\n}_{e_i}(\phi e_j) \ra. 
 \endaligned
\end{equation*}
Notice that
\[
\phi E_i=E_{n+i}, \quad \phi E_{n+i}=-E_i, \quad \phi\xi=0.
\]
Thus by (\ref{eqn-tangent}), 
\begin{equation*}\aligned
\phi e_j = \frac{1}{2}E_{n+j} + \frac{1}{2}u_{kj}E_k.
 \endaligned
\end{equation*}
Then from the above equality and (\ref{eqn-LeMS12}), 
\begin{equation*}\label{eqn-LeM36}\aligned
\la \overline{\n}_{e_i} X, \phi e_j \ra = & \left< e_i  - \frac{1}{4}V(E_{n+i}+u_{ki}E_k), \frac{1}{2}E_{n+j} + \frac{1}{2}u_{lj}E_l \right> \\
=& \la e_i, \phi e_j \ra - \frac{1}{8}V\la E_{n+i}+u_{ki}E_k, E_{n+j} + u_{lj}E_l  \ra \\
=& -\frac{1}{8} V (\d_{ij} +u_{ki}u_{kj}).
 \endaligned
\end{equation*}
An easy calculation gives
\begin{equation*}\label{eqn-LeMS16}\aligned
\la e_i, e_j \ra = \frac{1}{4}(\d_{ij} +u_{ki}u_{kj}).
 \endaligned
\end{equation*}
Hence from the above two equalities,
\begin{equation*}\label{eqn-LeMS17}\aligned
\la \overline{\n}_{e_i} X, \phi e_j \ra =  -\frac{1}{2} V \la e_i, e_j \ra.
 \endaligned
\end{equation*}
It follows
\begin{equation}\label{eqn-LeM40}\aligned
-\frac{1}{2}g^{ij} \la \overline{\n}_{e_i}X, \phi e_j \ra = \frac{1}{4}Vg^{ij} \la e_i, e_j \ra = \frac{n}{4}V.
 \endaligned
\end{equation}
Notice that
\[
(\overline{\n}_{e_i}\phi) e_j = \la e_i, e_j \ra \xi - \la e_j, \xi \ra e_i = \la e_i, e_j \ra \xi.
\]
Hence we get
\begin{equation}\label{eqn-LeM41}\aligned
\la X, \overline{\n}_{e_i}(\phi e_j)  \ra =& \la X,  (\overline{\n}_{e_i}\phi) e_j \ra +  \la X, \phi (\overline{\n}_{e_i}e_j) \ra \\
=& \la e_i, e_j \ra \la X, \xi \ra + \la X, \phi (\overline{\n}_{e_i}e_j) \ra \\
=& \frac{1}{2}V\la e_i, e_j \ra + \la X, \phi (\overline{\n}_{e_i}e_j) \ra.
 \endaligned
\end{equation}
By (\ref{eqn-LeM40}) and (\ref{eqn-LeM41}), 
\begin{equation}\label{eqn-LeM42}\aligned
\D \theta =& g^{ij}e_ie_j(\theta) - g^{ij}(\n_{e_i}e_j) \theta \\
=& -\frac{1}{2}g^{ij} \la \overline{\n}_{e_i}X, \phi e_j \ra  -\frac{1}{2}g^{ij} \la X, \overline{\n}_{e_i}(\phi e_j)  \ra - g^{ij}(\n_{e_i}e_j) \theta \\
=& \frac{n}{4}V - \frac{1}{2}g^{ij} \left( \frac{1}{2}V\la e_i, e_j \ra + \la X, \phi (\overline{\n}_{e_i}e_j) \ra \right) - g^{ij} \la H, \phi \n_{e_i}e_j \ra \\
=& -\frac{1}{2}g^{ij}\la X, \phi(\overline{\n}_{e_i}e_j) \ra - g^{ij} \la H, \phi \n_{e_i}e_j \ra. 
 \endaligned
\end{equation}
From the Legendrian self-shrinker equation,
\begin{equation}\label{eqn-LeM43}\aligned
 \la H, \phi \n_{e_i}e_j \ra = \la \theta \xi - \frac{1}{2}X^N, \phi \n_{e_i}e_j \ra = -\frac{1}{2}\la X^N, \phi \n_{e_i}e_j \ra = = -\frac{1}{2}\la X, \phi \n_{e_i}e_j \ra.
 \endaligned
\end{equation}
Substituting (\ref{eqn-LeM43}) into (\ref{eqn-LeM42}), 
\begin{equation}\label{eqn-LeM44}\aligned
\D \theta 
=& -\frac{1}{2}g^{ij}\la X, \phi(\overline{\n}_{e_i}e_j) \ra +\frac{1}{2}g^{ij}\la X, \phi \n_{e_i}e_j \ra \\
=& -\frac{1}{2}g^{ij} \la X, \phi B(e_i,e_j) \ra = -\frac{1}{2}\la X, \phi H \ra \\
=& \frac{1}{2}\la X, \n \theta \ra.
 \endaligned
\end{equation}
Namely,
\begin{equation}\label{eqn-LeM45}\aligned
\mathcal{L}\theta := \D \theta - \frac{1}{2}\la X, \n \theta \ra = 0.
 \endaligned
\end{equation}
Notice that
\[
\mathcal{L}= e^{\frac{|X|^2}{4}}\div(e^{-\frac{|X|^2}{4}}\n(\cdot)).
\]
Hence we have
\[
e^{\frac{|X|^2}{4}}\div(e^{-\frac{|X|^2}{4}}\n\theta) = 0.
\]
Let $\eta \in C^\infty(\Sigma^n)$ such that $\eta \equiv 1$ on $D_r, \eta \equiv 0$ outside $D_{2r}$ and $|\n \eta| \leq \frac{C_2}{r}$. Multiplying $\theta \eta^2 e^{-\frac{|X|^2}{4}}$ on both sides of the above equality and integrating by parts yield
\begin{equation}\label{eqn-LeM46}\aligned
0 =& \int_{\Sigma^n} \theta \eta^2 \div(e^{-\frac{|X|^2}{4}}\n \theta) = \int_{\Sigma^n} \div(\theta\eta^2 e^{-\frac{|X|^2}{4}}\n \theta) - \int_{\Sigma^n} \la \n(\theta\eta^2), \n \theta \ra e^{-\frac{|X|^2}{4}} \\
=& - \int_{\Sigma^n} |\n \theta|^2 \eta^2 e^{-\frac{|X|^2}{4}} - 2\int_{\Sigma^n} \la \n\eta, \n\theta \ra \theta\eta e^{-\frac{|X|^2}{4}}.
 \endaligned
\end{equation}
Since 
\begin{equation}\label{eqn-LeM47}\aligned
- 2\int_{\Sigma^n} \la \n\eta, \n\theta \ra \theta\eta e^{-\frac{|X|^2}{4}} \leq \frac{1}{2}\int_{\Sigma^n} |\n\theta|^2 \eta^2 e^{-\frac{|X|^2}{4}} + 2\int_{\Sigma^n} \theta^2|\n\eta|^2 e^{-\frac{|X|^2}{4}}. 
 \endaligned
\end{equation}
Therefore from (\ref{eqn-LeM46}) and (\ref{eqn-LeM47}), we obtain
\begin{equation}\label{eqn-LeM48}\aligned
\int_{\Sigma^n} |\n\theta|^2 \eta^2 e^{-\frac{|X|^2}{4}} \leq 4\int_{\Sigma^n} \theta^2|\n\eta|^2 e^{-\frac{|X|^2}{4}}. 
 \endaligned
\end{equation}
By Lemma \ref{lem2}, it follows that
\begin{equation}\label{eqn-LeM49}\aligned
\int_{D_r} |\n\theta|^2 e^{-\frac{|X|^2}{4}} \leq & \frac{4C_2^{2}}{r^2}\int_{D_{2r}\backslash D_r} \theta^2 e^{-\frac{|X|^2}{4}} \\
\leq & \frac{4C_2^{2}}{r^2}\cdot \frac{n^2\pi^2}{4}\int_{D_{2r}\backslash D_r}  e^{-\frac{|X|^2}{4}} \\
\leq & \frac{n^2C_2^{2}\pi^2}{r^2}  e^{-\frac{r^2}{4}} {\rm Vol}(D_{2r}\backslash D_r) \\
\leq &  \frac{n^2C_2^{2}\pi^2}{r^2}  e^{-\frac{r^2}{4}} \cdot C_1 (2r)^n \\
=& n^2 2^n C_1C_2^{2}\pi^2 r^{n-2}e^{-\frac{r^2}{4}}.
 \endaligned
\end{equation}
Letting $r\to +\infty$, we derive that $\theta\equiv constant$. By (\ref{eqn-LeSh2}), we have
\begin{equation}\label{eqn-LeM50}\aligned
\theta_i = \frac{1}{4}\left(\frac{1}{2}u_i -\frac{1}{2}x\cdot Du_i \right)=0
 \endaligned
\end{equation}
and 
\begin{equation}\label{eqn-LeM51}\aligned
\theta_{ij} =  -\frac{1}{8}x_k u_{kij}=0.
 \endaligned
\end{equation}
Then by Euler’s homogeneous function theorem, $u_{ij}$ is homogeneous of degree 0.
Moreover, the function $u_{ij}$ is smooth at the origin, therefore $u_{ij}$ is constant. It follows
from (\ref{eqn-LeM50}) that $u$ is the quadratic polynomial (c.f. \cite{CCY12}), we conclude that $u$ is the quadratic polynomial $u(0)+\frac{1}{2}\la D^2 u(0)x, x \ra$.
\end{proof}

\vskip24pt

\end{document}